\documentclass[11pt, oneside]{amsart}   	% use "amsart" instead of "article" for AMSLaTeX format
\usepackage{amsmath,amsthm,amscd,amsfonts, amssymb, mathrsfs}

\usepackage{geometry}                		% See geometry.pdf to learn the layout options. There are lots.
\geometry{letterpaper}                   		% ... or a4paper or a5paper or ... 
\usepackage{graphicx}				% Use pdf, png, jpg, or eps? with pdflatex; use eps in DVI mode
								% TeX will automatically convert eps --> pdf in pdflatex		

\makeatletter
\@namedef{subjclassname@2020}{\textup{2020} Mathematics Subject Classification}
\makeatother

\newcommand{\sect}[1]{\section{#1}\setcounter{equation}{0}}
\newcommand{\subsect}[1]{\subsection{#1}}

%==============================
\font\mbn=msbm10 scaled \magstep1
\font\mbs=msbm7 scaled \magstep1
\font\mbss=msbm5 scaled \magstep1
\newfam\mbff
\textfont\mbff=\mbn
\scriptfont\mbff=\mbs
\scriptscriptfont\mbff=\mbss   

\newcommand{\Di}      {\mathbb{D}}

\newcommand{\N}       { \mathbb{N}}
  
\newcommand\Co           {{\mathbb C}}

%========================================================================
\newtheorem{Th}{Theorem}[section]
\newtheorem{Lm}[Th]{Lemma}
\newtheorem{C}[Th]{Corollary}
\newtheorem{D}[Th]{Definition}

\newtheorem{Prop}[Th]{Proposition}
\newtheorem{R}[Th]{Remark}

\newtheorem{E}[Th]{Example}

\newtheorem*{Lemma A}{Lemma A}
\newtheorem*{Lemma B}{Lemma B}
\newtheorem*{Lemma C}{Lemma C}

\newtheorem*{Th A}{Theorem A}
\newtheorem*{Th B}{Theorem B}
%=====
\begin{document}

\title[On homomorphisms of Douglas algebras]{On homomorphisms of Douglas algebras}
\author{Alexander Brudnyi}
\address{Department of Mathematics and Statistics\newline
\hspace*{1em} University of Calgary\newline
\hspace*{1em} Calgary, Alberta, Canada\newline
\hspace*{1em} T2N 1N4}
\email{abrudnyi@ucalgary.ca}

\keywords{Maximal ideal space, Gleason part, Douglas algebra, homomorphism, first-countable $T_1$ space,  Blaschke product.}
\subjclass[2020]{Primary 46J20. Secondary 30H05.}

\thanks{Research is supported in part by NSERC}

\begin{abstract} 
The paper describes homomorphisms between Douglas algebras and some semisimple Banach algebras. The main tool is a result on the structure of the space
$C(Z,\mathfrak M)$ of continuous mappings from a connected first-countable $T_1$ space $Z$ to the maximal ideal space $\mathfrak M$ of the algebra $H^\infty$ of bounded holomorphic functions on the unit disk $\Di\subset\Co$. In particular, it is shown that the space of continuous mappings from  $Z$ to $\Di$ is dense in the topology of pointwise convergence in $C(Z,\mathfrak M)$ and
the homotopy groups of $\mathfrak M$ are trivial.
 \end{abstract}

\date{}

\maketitle

\sect{Formulation of Main Results}
\subsect{} 
Recall that for a commutative unital complex Banach algebra $A$,  the maximal ideal space $\mathfrak M(A)\subset A^\ast$  
 is the set of nonzero homomorphisms $A \!\rightarrow\! \Co$ endowed with the Gelfand topology, the weak-$\ast$ topology of  $A^\ast$. It is a compact Hausdorff space contained in the unit sphere of $A^\ast$. The {\em Gelfand transform} defined by $\hat{a}(\varphi):=\varphi(a)$ for $a\in A$ and $\varphi \in \mathfrak M(A)$ is a nonincreasing-norm morphism from $A$ into the Banach algebra $C(\mathfrak M (A))$ of complex-valued continuous functions on $\mathfrak M(A)$. For $A$ a Banach complex function algebra on a topological space $X$, there is a continuous mapping $\iota:X\hookrightarrow \mathfrak M(A)$ taking $x\in X$ to the evaluation homomorphism $f\mapsto f(x)$, $f\in A$, injective if $A$ separates the points of $X$. 
 
 Let $H^\infty$ be the Banach algebra of bounded holomorphic functions on the open unit disk $\Di\subset\Co$ equipped with pointwise multiplication and supremum norm and let $\mathfrak M$ be its maximal ideal space. Then the  Gelfand transform  $\,\hat{\,}\,: H^\infty\rightarrow C(\mathfrak M)$ is an isometry and $\iota:\Di\hookrightarrow \mathfrak M$ is an embedding with dense image by the celebrated Carleson corona theorem \cite{C}. In the sequel, we identify $\Di$ with its image under $\iota$.

Let 
\begin{equation}\label{e1.1}
\rho(z,w):=\left|\frac{z-w}{1-\bar w z}\right|,\qquad z,w\in\mathbb D,
\end{equation}
be the pseudohyperbolic metric on $\Di$.

For $x,y\in \mathfrak M$ the formula
\begin{equation}\label{e1.2}
\rho(x,y):=\sup\{|\hat f(y)|\, :\, f\in H^\infty,\, \hat f(x)=0,\, \|f\|_{H^\infty}\le 1\}
\end{equation}
gives an extension of  $\rho$  to $\mathfrak M$.
The {\em Gleason part} of $x\in \mathfrak M$ is then defined by $\pi(x):=\{y\in \mathfrak M\, :\, \rho(x,y)<1\}$. For $x,y\in \mathfrak M$ we have
$\pi(x)=\pi(y)$ or $\pi(x)\cap\pi(y)=\emptyset$. Hoffman's classification of Gleason parts \cite{H} shows that there are only two cases: either $\pi(x)=\{x\}$ or $\pi(x)$ is an analytic disk. The former case means that there is an {\em analytic parametrization}
 of $\pi(x)$, i.e., a continuous one-to-one and onto mapping $L:\Di\to\pi(x)$ such that $\hat f\circ L\in H^\infty$ for every $f\in H^\infty$.
Moreover, any analytic disk is contained in a Gleason part and any maximal (i.e., not contained in any other) analytic disk is a Gleason part. By $\mathfrak M_a$ and $\mathfrak M_s$ we denote the sets of all non-trivial (analytic disks) and trivial (one-pointed) Gleason parts, respectively. It is known that $\mathfrak M_a\subset \mathfrak M$ is open. Hoffman proved that $\pi(x)\subset \mathfrak M_a$ if and only if $x$ belongs to the closure of an interpolating sequence for $H^\infty$.  

Let $L^\infty$ be the Banach algebra  of essentially bounded Lebesgue measurable functions on the unit circle $\mathbb S$ (with pointwise operations and the supremum norm). Via identification with boundary values, $H^\infty$ is a uniformly closed subalgebra of $L^\infty$.  Due to the Chang-Marshall theorem, see, e.g., \cite[Ch.\,IX,\,\S 3]{Ga}, any uniformly 
closed subalgebra $\mathscr D$ between $H^\infty$ and $L^\infty$ is a {\em Douglas algebra} generated by $H^\infty$ and a family $\mathscr U_\mathscr D\subset\overline{H^\infty}$ of functions conjugate to some inner functions of $H^\infty$ (written $\mathscr D=[H^\infty,\mathscr U_\mathscr D]$). The maximal ideal space $\mathfrak M(\mathscr D)$ of $\mathscr D$ is a closed subset of 
$\mathfrak M$ of the form (see, e.g., \cite[Ch.\,IX,\,Thm.\,1.3]{Ga}):
\begin{equation}\label{eq1.3}
\mathfrak M(\mathscr D)=\bigcap_{\bar u\in \mathscr U_\mathscr D}\{x\in \mathfrak M\, :\, |
\hat u(x)|=1\}.
\end{equation}
In particular, by the maximum modulus principle for holomorphic functions, $\mathfrak M(\mathscr D)\cap\mathfrak M_a$ is either empty or  the union of some Gleason parts. 

Our first result describes the structure of the set ${\rm Hom}(\mathscr D, A)$ of unital (i.e., sending units to units) homomorphisms from $\mathscr D=[H^\infty,\mathscr U_\mathscr D]$ to a certain class of Banach complex function algebra $A$. For its formulation, we require
\begin{D}\label{def1.1}
 A topological space $Z$ is  of class $\mathscr C$  if each pair of points of $Z$ belongs to the image of a continuous mapping from a connected first-countable $T_1$ space to $Z$. 
 \end{D}
\noindent For instance, $\mathscr C$ contains direct products of connected first-countable $T_1$ spaces and path-connected spaces.

Recall that a topological space is $T_1$  if for every pair of distinct points, each has a neighbourhood not containing the other point.  A topological space is {\em first-countable} if each point has a countable neighbourhood basis. (E.g., a metric space is a first-countable $T_1$ space.) 

Suppose $Z$ is a compact topological space whose
connected components are of class $\mathscr C$ and the cardinality of the set of connected components is less than  $2^{\mathfrak c}$, where $\frak c$ is the cardinality of the continuum. Let $A$ be a Banach complex function algebra on $Z$. We denote by $\mathbf 1_Z$ the unit of $A$ (i.e., the constant function of value $1$ on $Z$). For a nonzero idempotent $p\in A$, we set $A_p:=\{pg\, :\, g\in A\}$. Then $A_p$
is a closed subalgebra of $A$ with unit $p$.  
\begin{Th}\label{te1.2}
For each $\Phi\in {\rm Hom}(\mathscr D, A)$ there exist
pairwise distinct idempotents and Gleason parts $p_i\in A$ and $\pi_i\subset \mathfrak M(\mathscr D)$, $1\le i\le k$, $k\in\N$, such that 
\begin{equation}\label{eq1.4}
\Phi=p_1\cdot\Phi+\cdots +p_k\cdot\Phi 
\end{equation}
and each homomorphism $p_i\cdot\Phi\in {\rm Hom}(\mathscr D, A_{p_i})$ is a compact linear operator of one of the  forms:
\begin{itemize}
\item[(a)]
If $\pi_i=\{x_i\}\subset\mathfrak M(\mathscr D)\cap\mathfrak M_s$, then
\begin{equation}\label{eq1.5}
(p_i\cdot \Phi)(f)=\hat f(x_i)\cdot  p_i,\quad f\in \mathscr D; 
\end{equation}
\item[(b)]
If $\pi_i\subset \mathfrak M(\mathscr D)\cap\mathfrak M_a$ with an analytic parametrization $L_i$, then
there exists a function $g_i\in A_{p_i}$ with a compact range in $\Di$ such that
\begin{equation}\label{eq1.6}
(p_i\cdot \Phi)(f)=(\hat f\circ L_i\circ g_i)\cdot p_i,\quad f\in \mathscr D.
\end{equation}
\end{itemize}
\end{Th}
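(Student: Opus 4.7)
The plan is to pass to the map dual to $\Phi$ and exploit the structural result on $C(Z,\mathfrak M)$ announced in the abstract. Each $z\in Z$ gives a character $f\mapsto \Phi(f)(z)$ on $\mathscr D$, so $\Phi$ dualizes to a continuous map $\phi:Z\to\mathfrak M(\mathscr D)\subset\mathfrak M$ satisfying $\Phi(f)(z)=\hat f(\phi(z))$ for all $f\in\mathscr D$, $z\in Z$. The entire theorem reduces to showing that $Z$ splits into finitely many clopen pieces on each of which $\phi$ takes the required shape (constant or factoring through an analytic disk parametrization).

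The crux is to show that the image of every connected component of $Z$ under $\phi$ lies in a single Gleason part of $\mathfrak M(\mathscr D)$. By Definition~\ref{def1.1} it suffices to prove, for any continuous map $\alpha:W\to Z$ from a connected first-countable $T_1$ space $W$, that $\phi\circ\alpha:W\to\mathfrak M$ has image in one Gleason part. I would apply the density result from the abstract: continuous maps $W\to\Di$ are pointwise dense in $C(W,\mathfrak M)$. Approximate $\phi\circ\alpha$ by such maps $\psi_n:W\to\Di$; each $\psi_n$ takes values in the single Gleason part $\Di$. Then use that two points in distinct Gleason parts have pseudohyperbolic distance $1$, together with the lower semicontinuity of $y\mapsto\rho(x,y)$ inherited from~\eqref{e1.2} as a supremum of continuous functions, to transfer the one-part property from the approximants to the limit $\phi\circ\alpha$.

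Next, I would argue that only finitely many Gleason parts $\pi_1,\dots,\pi_k$ are hit by $\phi$: an infinite sequence of distinct parts in the image would, by compactness of $Z$, produce a convergent sequence of image points whose limit (by continuity of $\phi$) would lie in yet another Gleason part, contradicting the distance-$1$ separation; the cardinality bound $<2^{\mathfrak c}$ on components rules out other sources of infinitude. The preimages $Z_i=\phi^{-1}(\pi_i)$ are thus pairwise disjoint clopen sets covering $Z$. The associated idempotents $p_i\in A$ are then produced via a Shilov-type argument: one finds $e_i\in\mathscr D$ whose Gelfand transform $\hat e_i$ is close enough to the characteristic function of a neighbourhood of $\phi(Z_i)$ in $\mathfrak M(\mathscr D)$ to force $\Phi(e_i)$ to be an idempotent in $A$ that projects onto $Z_i$.

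With the decomposition in hand, the two formulas~\eqref{eq1.5} and~\eqref{eq1.6} are read off directly. Case~(a) is immediate since $\phi|_{Z_i}\equiv x_i$. For case~(b), set $g_i:=L_i^{-1}\circ\phi|_{Z_i}$; this is continuous into $\Di$ with compact range by compactness of $Z_i$ and continuity of $L_i^{-1}$ on $\pi_i$. Compactness of the operator $p_i\cdot\Phi$ follows from Montel's theorem applied to the family $\{\hat f\circ L_i:f\in\mathscr D,\,\|f\|\le1\}\subset H^\infty$ restricted to the compactum $g_i(Z_i)\subset\Di$. The main obstacle I expect is the Gleason-part rigidity step above: upgrading pointwise approximation into a qualitative constraint on the limit map requires careful semicontinuity arguments and the abstract's structural theorem used in its full strength. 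A secondary subtle point is confirming that $g_i$ genuinely lies in $A_{p_i}$ as an algebra element (not merely as a continuous function on $Z_i$); this should follow by identifying $g_i$ with $\Phi(\zeta)|_{Z_i}$ for a suitable $\zeta\in\mathscr D$ such that $\hat\zeta\circ L_i$ is the identity on $\Di$.
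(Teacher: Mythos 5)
Your overall skeleton (dualize $\Phi$ to a continuous map $\phi=\Phi^*\circ\Delta:Z\to\mathfrak M(\mathscr D)$, show each component lands in one Gleason part, split off idempotents, read off the two formulas) matches the paper, but the two hardest steps are not actually carried out, and the routes you sketch for them would fail. First, the Gleason-part rigidity. You propose to approximate $\phi\circ\alpha$ pointwise by maps into $\Di$ and then use lower semicontinuity of $\rho(x,\cdot)$ to force the limit into one part. This is circular --- in the paper the pointwise density statement (Theorem \ref{te1.6}(i)) is \emph{deduced from} the one-part property (Theorem \ref{te1.5}), via the factorization $F=L_x\circ\widetilde F$ --- and, worse, the semicontinuity inequality points the wrong way: lower semicontinuity gives $\rho(F(w),F(w'))\le\liminf_n\rho(\psi_n(w),\psi_n(w'))\le 1$, which is vacuous since $\rho\le 1$ always. (Indeed, by the corona theorem every point of $\mathfrak M$ is a limit of points of $\Di$, so pointwise approximability by $\Di$-valued maps cannot by itself constrain which parts the limit meets.) The paper's actual mechanism is entirely different: Dufresnoy's result that a sequence converging to a point of a Gleason part eventually lies in that part, combined with first countability and $T_1$ to show each $F^{-1}(\pi)$ is open, and then connectedness (Proposition \ref{prop2.1}). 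Similarly, your finiteness argument (``a convergent sequence of image points whose limit would lie in yet another part, contradicting distance-$1$ separation'') is not a contradiction and presumes sequential compactness; the paper instead uses the cardinality dichotomy of Dufresnoy (a compact set meeting infinitely many parts has cardinality $\ge 2^{\mathfrak c}$) against the $<2^{\mathfrak c}$ bound on components, and needs Axler--Gorkin plus Proposition \ref{prop2.3} to get that each $K_i=\phi(Z)\cap\pi_i$ is compact \emph{inside} $\pi_i$ (which is what makes $g_i$ have compact range in $\Di$ and makes the $Z_i$ clopen).

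Second, membership $g_i\in A_{p_i}$. Your fallback --- find $\zeta\in\mathscr D$ with $\hat\zeta\circ L_i=\mathrm{id}_\Di$ and set $g_i=\Phi(\zeta)|_{Z_i}$ --- founders because no such $\zeta$ need exist: since $|\hat u|=1$ on $\pi_i$ for $\bar u\in\mathscr U_{\mathscr D}$, one has $L_i^*(\mathscr D)\subset H^\infty$, and Hoffman's theorem only provides an interpolating Blaschke product $B$ with $\hat B\circ L_i$ injective on a \emph{neighbourhood of a given compact subset} of $\Di$, not globally equal to the identity. The paper's Lemma on $g_i$ is genuine work: it takes such a $B$, inverts $\hat B\circ L_i$ holomorphically near $(\hat B\circ L_i)(g_i(Z_i))$, and applies the holomorphic functional calculus to the element $\Phi(B)|_{Z_i}=\hat B\circ L_i\circ g_i\in A|_{Z_i}$ to recover $g_i$. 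Your idempotent construction via $e_i\in\mathscr D$ with $\hat e_i$ near a characteristic function is also problematic ($\mathfrak M(\mathscr D)$ carries no convenient idempotent structure separating the $K_i$); the paper instead applies the Shilov idempotent theorem in $A$ to the clopen sets $(\Phi^*)^{-1}(K_i)\subset\mathfrak M(A)$. Your compactness argument via restriction to $g_i(Z_i)\Subset\Di$ and Montel is fine and agrees with the paper.
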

\begin{R}\label{rem1.3}
{\rm Under the hypothesis of the theorem, 
\[
{\rm Ker}\,\Phi=\bigcap_{i=1}^k {\rm Ker}\, (p_i\cdot\Phi),
\] 
where
\begin{itemize}
\item[(a$_1$)] If $\pi_i=\{x_i\}$, then  
\[
{\rm Ker}\, (p_i\cdot\Phi)={\rm Ker}\,x_i,
\]
a maximal ideal of $\mathscr D$;\smallskip
\item[(b$_1$)]
If $\pi_i\subset \mathfrak M(\mathscr D)\cap\mathfrak M_a$  and the range of $g_i$ restricted to ${\rm supp}\, p_i$, the support of $p_i$, is a finite set, say, $\{z_{i_1},\dots,z_{i_l}\} \subset\Di$, then 
\[
{\rm Ker}\, (p_i\cdot\Phi)=\bigcap_{j=1}^l{\rm Ker}\,L_i(z_{i_j});\smallskip
\]
\item[(b$_2$)]
 If $\pi_i\subset \mathfrak M(\mathscr D)\cap\mathfrak M_a$  and the range of $g_i$ restricted to ${\rm supp}\, p_i$ is infinite, then 
 \[
 {\rm Ker}\, (p_i\cdot\Phi)=\{f\in\mathscr D\, :\, \hat f|_{\pi_i}=0\}.
 \]
 \end{itemize}
Indeed, in the final case, since $|\hat u|=1$ on $\pi_i$ for $\bar u\in \mathscr U_\mathscr D$, the function $L_i^*\bar u:=\bar u\circ L_i$ is constant and hence $L_i^*(\mathscr D)\subset H^\infty$. Also, $g_i({\rm supp}\,p_i)$ is an infinite compact subset of $\Di$. Thus, if  
$\hat f\circ L_i=0$ on $g_i({\rm supp}\,p_i)$ for some $f\in\mathscr D$, then $\hat f\circ L_i=0$ on $\Di$, i.e., $\hat{f}|_{\pi_i}=0$.
}
\end{R}
\begin{E}\label{ex1.4}
{\rm  Let $A$ be a separable commutative semisimple unital complex Banach algebra. Then
the closed unit ball of the dual space $A^\ast$ equipped with the weak-$\ast$ topology is a metrizable compact space; hence, its cardinality is $\le\mathfrak c\, (<2^{\frak c})$. In particular, $\mathfrak M(A)\in\mathscr C$. Since $A$ is semisimple, the Gelfand transform $\hat{\, }: A\to C(\mathfrak M(A))$ is a Banach algebra monomorphism. Thus, $A$ can be identified via $\hat{\, }$ with a Banach complex function algebra on $\mathfrak M(A)$ and so
{\em  the structure of the set
${\rm Hom}(\mathscr D,  A)$, $\mathscr D=[H^\infty,\mathscr U_\mathscr D]$, is described by Theorem \ref{te1.2}}.}
\end{E}
\subsect{}
Theorem \ref{te1.2} is based on some results about
continuous mappings to the maximal ideal space of $H^\infty$ formulated in this section.

In what follows, we work in the category of pointed topological spaces with pointed mappings (i.e., basepoint preserving continuous mappings) as morphisms.

\begin{Th}\label{te1.5}
Let $F: (Z,z)\rightarrow (\mathfrak M, x)$ be a pointed mapping from a topological space $Z$ of class $\mathscr C$ to $\mathfrak M$. Then $F(Z)\subset \pi(x)$.
\end{Th}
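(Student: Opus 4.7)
The plan is to first reduce Theorem~\ref{te1.5} to a core lemma about connected first-countable $T_1$ domains, and then to exploit the topology of such domains together with the Gleason-part partition of $\mathfrak M$. The core lemma I would aim for reads: \emph{if $W$ is a connected first-countable $T_1$ space and $G\colon(W,v_0)\to(\mathfrak M,x)$ is a pointed continuous map, then $G(W)\subset\pi(x)$.} Assuming this lemma, Theorem~\ref{te1.5} follows directly from the defining property of class $\mathscr C$: for each $z'\in Z$, pick a connected first-countable $T_1$ space $W$ and a continuous $\phi\colon W\to Z$ with both $z$ and $z'$ in its image, say $\phi(v_0)=z$, $\phi(v_1)=z'$; then $G:=F\circ\phi$ satisfies the hypotheses of the lemma, so $F(z')=G(v_1)\in\pi(G(v_0))=\pi(x)$.

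To prove the core lemma, set $S:=G^{-1}(\pi(x))\subset W$. Since $v_0\in S$ and $W$ is connected, it is enough to establish the following local statement: for every $w\in W$ there is a neighborhood $U$ of $w$ in $W$ with $G(U)\subset\pi(G(w))$. Indeed, this makes each $G^{-1}(\pi(y))$ open, so the partition of $W$ induced by the preimages of the Gleason parts consists of open sets, and connectedness forces the block containing $v_0$ to be all of $W$. By first-countability, the local statement is equivalent to the sequential one: whenever $w_n\to w$ in $W$, one has $G(w_n)\in\pi(G(w))$ for all sufficiently large $n$.

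The main obstacle is precisely this sequential statement, for two reasons. First, Gleason parts are not closed in $\mathfrak M$ (an interpolating Blaschke sequence in $\mathbb D$ converges in $\mathfrak M$ to trivial parts lying outside $\pi(0)=\mathbb D$), so the analogous claim for an arbitrary convergent sequence in $\mathfrak M$ is simply false; the argument must therefore use that the sequence $\{G(w_n)\}$ is not merely convergent in $\mathfrak M$ but arises from a continuous map on a connected first-countable $T_1$ space. Second, $\rho(x,\cdot)$ on $\mathfrak M$ is only lower semicontinuous---being a supremum of moduli of Gelfand transforms---so the naive approach of pulling back a pseudohyperbolic $\varepsilon$-ball about $x$ fails. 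My plan would be to argue by contradiction: if some subsequence satisfied $\rho(G(w),G(w_n))=1$, a Hoffman/Marshall-type selection would supply interpolating Blaschke products $B_n\in H^\infty$ with $\hat B_n(G(w))=0$ and $|\hat B_n(G(w_n))|$ close to $1$, and the task becomes to splice these into a single $H^\infty$ function (or Douglas-algebra test) whose Gelfand transform is discontinuous along $w_n\to w$, contradicting continuity of $G$. Carrying out this splicing---combining countably many Blaschke products into one function controlled by the countable neighborhood basis at $w$ and the connectedness of $W$---is the technical heart of the proof, and is where the structural facts about $\mathfrak M$ (openness of $\mathfrak M_a$, the analytic parametrizations $L$ on $\mathfrak M_a$, and density of $\mathbb D$ in $\mathfrak M$ via the corona theorem) will need to be invoked.
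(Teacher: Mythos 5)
Your reduction to the core lemma and the openness-of-preimages scheme is exactly the paper's argument (Proposition~\ref{prop2.1} there): reduce via the definition of class $\mathscr C$ to a connected first-countable $T_1$ domain, use first countability to phrase openness of $G^{-1}(\pi(y))$ sequentially, and conclude by connectedness. The gap is in the sequential step. You assert that the statement ``if $y_n\to y$ in $\mathfrak M$ and $y$ lies in a Gleason part $\pi$, then $y_n\in\pi$ for all large $n$'' is ``simply false'' for arbitrary convergent sequences, offering as a counterexample an interpolating Blaschke sequence in $\mathbb D$. But such a sequence does not converge in $\mathfrak M$: if $\{z_n\}$ is interpolating, one can choose $f\in H^\infty$ with $f(z_n)=(-1)^n$, so $\hat f$ fails to converge along the sequence; the closure of $\{z_n\}$ in $\mathfrak M$ is homeomorphic to $\beta\N$, and the sequence has $2^{\mathfrak c}$ cluster points but no limit. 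In fact, the sequential statement for \emph{arbitrary} convergent sequences in $\mathfrak M$ is a known theorem (Dufresnoy, \cite[Prop.\,1]{D}; see also \cite{AG}), and it is precisely what the paper invokes to close the argument: convergent sequences in $\mathfrak M$ are very rigid, so no extra input from the connectedness of $W$ or the continuity of $G$ is needed at this point.

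Having discarded that route on incorrect grounds, you propose to prove the sequential statement by contradiction, splicing interpolating Blaschke products $B_n$ with $\hat B_n(G(w))=0$ and $|\hat B_n(G(w_n))|$ near $1$ into a single $H^\infty$ function whose Gelfand transform is discontinuous along $w_n\to w$. This is left entirely as a plan --- you say yourself it is ``the technical heart of the proof'' and do not carry it out --- so as written the proposal does not constitute a proof. The fix is simple: replace the splicing plan by a citation of (or a proof of) Dufresnoy's result, after which your argument coincides with the paper's.
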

 A topological space $X$ is said to be {\em compactly generated} if it satisfies the following condition:
a subspace $S$ is closed in $X$ if and only if $S \cap K$ is closed in $K$ for all compact subspaces $K\subseteq X$. For instance, direct products of first-countable spaces and locally compact spaces are compactly generated, see, e.g., \cite{St}.\smallskip

 As a corollary of Theorem \ref{te1.5} we obtain:
 
\begin{Th}\label{te1.6}
Let $Z$ be a compactly generated space of class $\mathscr C$.
 \begin{itemize}
 \item[(i)]
 The space of continuous mappings from  $Z$ to $\Di$ is dense in the topology of pointwise convergence in $C(Z,\mathfrak M)$.\smallskip
 \item[(ii)] 
 Every pointed mapping in $C(Z,\mathfrak M)$ is pointed null homotopic. In particular,
 all
 homotopy groups $\pi_n(\mathfrak M,x)$, $n\in\N$, are trivial.
 \end{itemize}
\end{Th}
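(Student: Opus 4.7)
The plan is to combine Theorem~\ref{te1.5}, Carleson's corona theorem, and Hoffman's realization of the analytic parametrization $L:\Di\to \pi(x_0)$ of a non-trivial Gleason part as a pointwise limit in $\mathfrak M$ of M\"obius automorphisms of $\Di$.

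Given $F\in C(Z,\mathfrak M)$, fix $z_0\in Z$ and set $x_0:=F(z_0)$. For every $z\in Z$ the class $\mathscr C$ hypothesis yields a continuous map $\phi:Y\to Z$ from a connected first-countable $T_1$ space $Y$ whose image contains $z_0$ and $z$; applying Theorem~\ref{te1.5} to the pointed composition $F\circ\phi:(Y,y_0)\to(\mathfrak M,x_0)$ with $y_0\in \phi^{-1}(z_0)$ forces $F(z)\in \pi(x_0)$, so $F(Z)\subset \pi(x_0)$. If $\pi(x_0)=\{x_0\}$ is trivial, then $F\equiv x_0$: any basic pointwise neighbourhood of $F$ cut out by $z_1,\dots,z_n\in Z$ and neighbourhoods $V_i$ of $x_0$ in $\mathfrak M$ contains the constant map with value $v\in \Di\cap \bigcap_i V_i$, non-empty by the corona theorem, and $F$ is tautologically null-homotopic.

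Assume instead that $\pi(x_0)=L(\Di)$ is an analytic disk. The key technical step is to lift $F$ to a continuous map $h:Z\to \Di$ with $F=L\circ h$. Granting this, pick a net $v_\alpha\in \Di$ with $v_\alpha\to x_0$ in $\mathfrak M$ (which exists by density of $\Di$) and form the M\"obius automorphisms $\phi_\alpha(w):=(v_\alpha+w)/(1+\bar v_\alpha w)$; after normalizing $L$ so that $L(0)=x_0$, Hoffman's theorem supplies a subnet along which $\phi_\alpha\to L$ pointwise in $\mathfrak M$. The maps $G_\alpha:=\phi_\alpha\circ h\in C(Z,\Di)$ then satisfy $G_\alpha(z)=\phi_\alpha(h(z))\to L(h(z))=F(z)$ in $\mathfrak M$ for each $z$, establishing (i). For (ii), set $w_0:=h(z_0)$; the straight-line pointed null-homotopy $h_t(z):=(1-t)h(z)+tw_0$ of $h$ in the convex disk $\Di$ yields, upon composition with $L$, the pointed null-homotopy $H(z,t):=L(h_t(z))$ of $F$ inside $\pi(x_0)$ ending at the constant map $L(w_0)=x_0$. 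Triviality of $\pi_n(\mathfrak M,x_0)$ is then immediate by applying (ii) with $Z=S^n$, a path-connected compact metric space and hence compactly generated and of class $\mathscr C$.

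The main obstacle is proving continuity of the lift $h=L^{-1}\circ F$. Since $Z$ is compactly generated it suffices to verify continuity on each compact $K\subset Z$, which reduces to the continuity of $L^{-1}$ on the compact set $F(K)\subset \pi(x_0)$. I would settle this by combining Hoffman's pointwise-limit representation of $L$ with the fact that the $H^\infty$-weak topology on $\Di$ agrees with the Euclidean one (since the identity $z\mapsto z$ itself lies in $H^\infty$), or by invoking the analogous continuous lift $g_i$ produced in the proof of Theorem~\ref{te1.2}(b).
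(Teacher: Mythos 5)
Your overall architecture coincides with the paper's: reduce via Theorem \ref{te1.5} to $F(Z)\subset\pi(x_0)$, handle trivial parts by the corona theorem, and for an analytic disk factor $F=L\circ h$ through a continuous $h:Z\to\Di$, then approximate by $\phi_\alpha\circ h$ with M\"obius maps $\phi_\alpha\to L$ for (i) and contract $h$ inside $\Di$ for (ii). All of that is fine. But the step you yourself flag as ``the main obstacle'' --- continuity of the lift $h=L^{-1}\circ F$, equivalently continuity of $L^{-1}$ on a compact $K'\subset\pi(x_0)$ --- is exactly where the real work of the paper lies, and neither of your two proposed ways of settling it works. Since $L|_{L^{-1}(K')}$ is a continuous bijection onto the compact Hausdorff set $K'$, the issue is precisely whether $L^{-1}(K')$ is compact in $\Di$, i.e., stays away from $\mathbb S$; this is Proposition \ref{prop2.3}, whose proof needs Hoffman's local model $\bar\Di\times\beta\N$ for $\mathfrak M_a$, a cardinality argument (infinite compact subsets of $\beta\N$ have cardinality $2^{\mathfrak c}$, while $\pi(x_0)$ has cardinality at most $\mathfrak c$), the Axler--Gorkin result that a sequence in a Gleason part converging in $\mathfrak M$ converges in the pseudohyperbolic metric, and the fact that $L$ is a $\rho$-isometry.

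Your first suggested fix --- that the $H^\infty$-weak topology on $\Di$ is Euclidean because $z\mapsto z$ lies in $H^\infty$ --- only says that the canonical copy $\iota(\Di)\subset\mathfrak M$ is an embedded disk. It says nothing about $\pi(x_0)$, which lies in the corona: on the fiber of $\mathfrak M$ over $\lambda\in\mathbb S$ the Gelfand transform of the coordinate function is identically $\lambda$, so $\hat z$ is constant on $\pi(x_0)$ and cannot detect $L^{-1}$. Indeed $L$ need not be a homeomorphism onto $\pi(x_0)$ globally, so any argument proving continuity of $L^{-1}$ without using compactness of $K'$ is suspect. Your second suggested fix --- importing the lift $g_i$ from the proof of Theorem \ref{te1.2}(b) --- is circular in this paper: that proof constructs $g_i$ by citing Lemma \ref{lem2.5}, which is part of the proof of Theorem \ref{te1.6} and rests on Proposition \ref{prop2.3} and Corollary \ref{cor2.4}. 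So the proposal is a correct skeleton with a genuine gap at its load-bearing joint.
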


\begin{R}\label{rem1.4}
{\rm (1)
The special case of part (i) of the theorem for
the space of analytic mappings from a connected analytic space to $\mathfrak M$ was initially proved  by Hoffman  \cite[Thm.\,5.6]{H}.\smallskip

\noindent (2) Theorem \ref{te1.5} is motivated by \cite[Thm.\,1.2\,(d)]{Br1} stating that if  $F: (Y,y)\rightarrow\mathfrak (\mathfrak M_a,x)$ is a pointed mapping of a connected and locally connected space $Y$, then  $F(Y)\subset \pi(x)$. It is not clear whether the same holds with $\mathfrak M$ in place of $\mathfrak M_a$.\smallskip

\noindent (3) Theorem \ref{te1.5} implies, in particular,
that Gleason parts are path components of $\mathfrak M$, the result originally obtained in \cite[Cor.\,12]{AG}.

\noindent (Recall that  a {\em path component} of a topological space $X$ is an equivalence class of $X$ under the equivalence relation:  $x,y\in X$ are equivalent  if there is a path from $x$ to $y$.)
}
\end{R}

\noindent {Acknowledgement.} I thank  R. Moritini for calling my attention to reference \cite{AG}.

\sect{Proofs of Theorems \ref{te1.5} and  \ref{te1.6}}
We derive Theorem \ref{te1.5} from the following more general result.

Let $Y$ be a topological space satisfying the properties
\begin{itemize}
\item[(a)]
There exists a family of mutually disjoint subsets $Y_i\subset Y$, $i\in I$, such that 
\[
Y=\bigsqcup_{i\in I} Y_i.
\]
\item[(b)]
If a sequence $\{y_n\}_{\in\N}\subset Y$ converges to some $y\in Y_i$, then $y_n\in Y_i$ for all sufficiently large $n$.
\end{itemize}
\begin{Prop}\label{prop2.1}
Let $F: (Z,z)\rightarrow (Y, y)$ be a pointed mapping from a topological space $Z$ of class $\mathscr C$ to $Y$. If $y\in Y_i$, then $F(Z)\subset Y_i$.
\end{Prop}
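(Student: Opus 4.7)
The plan is to reduce to the case of a source that is connected, first-countable, and $T_1$, and then to exhibit a clopen decomposition of that source driven by property (b), so that connectedness forces a single cell.

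First I would fix an arbitrary $w\in Z$ and invoke the class $\mathscr{C}$ hypothesis to obtain a continuous map $\varphi:W\to Z$ from a connected first-countable $T_1$ space $W$ whose image contains both $z$ and $w$, say $\varphi(w_0)=z$ and $\varphi(w_1)=w$. Let $G:=F\circ\varphi$, so that $G$ is continuous with $G(w_0)=y\in Y_i$. It will suffice to show $G(W)\subset Y_i$, since this yields $F(w)=G(w_1)\in Y_i$, and $w$ is arbitrary.

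Set $A_j:=G^{-1}(Y_j)$ for each $j\in I$, so that $W=\bigsqcup_{j\in I}A_j$ and $w_0\in A_i$. The main step is to show that $A_i$ is both open and closed in $W$; connectedness of $W$ will then yield $W=A_i$ and finish the proof. For closedness, take any $w\in\overline{A_i}$ and, using first-countability of $W$, choose a sequence $(w_n)\subset A_i$ with $w_n\to w$. Then $G(w_n)\in Y_i$ and $G(w_n)\to G(w)$; if $G(w)$ lay in some $Y_j$ with $j\neq i$, property (b) would force $G(w_n)\in Y_j$ for large $n$, contradicting $G(w_n)\in Y_i$. Hence $w\in A_i$. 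For openness, suppose some $w\in A_i$ fails to have a neighborhood inside $A_i$; pick a decreasing countable neighborhood basis $\{U_n\}$ at $w$ (available by first-countability, after intersecting) and choose $w_n\in U_n\setminus A_i$. Then $w_n\to w$ and $G(w_n)\to G(w)\in Y_i$, so property (b) again forces $G(w_n)\in Y_i$ for large $n$, contradicting $w_n\notin A_i$.

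I do not anticipate a serious obstacle here: property (b) is crafted precisely so that each $Y_j$ is \emph{sequentially open} inside $Y$, and this transfers through the continuous map $G$ to genuine openness of each $A_j$ in the first-countable space $W$, while closedness of $A_i$ comes from the complementary implication of the same property. The reduction via class $\mathscr{C}$ is immediate from its definition, and the $T_1$ hypothesis built into the definition of class $\mathscr{C}$ does not enter the argument explicitly.
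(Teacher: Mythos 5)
Your argument is correct and follows essentially the same route as the paper: reduce via the class $\mathscr C$ hypothesis to a connected first-countable $T_1$ source, then use property (b) together with sequences supplied by first-countability to show that the preimages of the $Y_j$ form a partition into clopen pieces, so connectedness forces a single piece. The only cosmetic difference is that you verify directly that $G^{-1}(Y_i)$ is clopen, whereas the paper shows every nonempty preimage is open and invokes the disjoint open cover; your closing remark that the $T_1$ assumption is not actually used in this step is also accurate.
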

\begin{proof}
By the definition of class $\mathscr C$, see Definition \ref{def1.1}, it suffices to prove the result for a connected first-countable $T_1$ space $Z$.

Now, by (a) $Z$ is the disjoint union of subsets $F^{-1}(Y_i)$, $i\in I$.   We prove that each  $F^{-1}(Y_i)\ne\emptyset$ is an open subset of $Y$. Indeed, for otherwise there is a boundary point $y\in F^{-1}(Y_i)$ of $F^{-1}(Y_i)$. Then, since $Z$ is a first-countable $T_1$-space, there is a  sequence $\{y_n\}_{n=1}^\infty\subset Z\setminus F^{-1}(Y_i)$ converging to $y$. Hence, the sequence $\{F(y_n)\}_{n=1}^\infty\subset Y\setminus Y_i$ converges to $F(y)\in Y_i$. But by (b), $F(y_n)\in Y_i$ for all sufficiently large $n$, a contradiction. 
The latter implies that $Z$ is the disjoint union of open subsets $F^{-1}(Y_i)$.
Then, since $Z$ is connected, $F^{-1}(Y_i)=\emptyset$ for all $Y_i$ which do not contain $y$. Hence, $F$ maps $(Z,z)$ into $(Y_i,y)$.
\end{proof}
\begin{proof}[Proof of Theorem \ref{te1.5}]
We set $Y:=\mathfrak M$ and take Gleason parts of $\mathfrak M$ as sets $Y_i$. Then property (b) of $Y$ follows from \cite[Prop.\,1]{D}. Now, the required result is a consequence of Proposition \ref{prop2.1}.
\end{proof}
\begin{E}\label{ex2.2}
{\rm Let $H^\infty(\Omega)$ be the Banach algebra of bounded holomorphic functions on a nonempty open subset $\Omega$ of $\Co^N$. We set $Y:=\mathfrak M(H^\infty(\Omega))$, the maximal ideal space of 
$H^\infty(\Omega)$, and take Gleason parts of $\mathfrak M(H^\infty(\Omega))$ as sets $Y_i$. Then property (b) of $Y$ follows from \cite[Cor.\,9]{AG}.
}
\end{E}
 
 To prove Theorem \ref{te1.6} we require the following 
 result.

Let $L:\Di\rightarrow\pi(x)\subset\mathfrak M_a$ be an analytic parametrization of $\pi(x)$.
\begin{Prop}\label{prop2.3}
A subset $K\subset \pi(x)$  is compact in the topology of $\mathfrak M$ if and only if $L^{-1}(K)$ is a compact subset of $\Di$.
\end{Prop}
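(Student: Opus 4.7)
The strategy is to reduce the proposition to the assertion that $L$ is a homeomorphism from $\Di$ onto $\pi(x)$ equipped with the relative Gelfand topology; once that is established, both implications follow immediately because homeomorphisms preserve compactness in both directions. The easy implication requires only the continuity of $L$: if $L^{-1}(K)$ is compact in $\Di$, then $K=L(L^{-1}(K))$ is the continuous image of a compact set and hence compact in $\mathfrak M$.

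For the converse direction I would invoke Hoffman's construction of the analytic parametrization \cite{H} (see also \cite[Ch.\,X]{Ga}): there exist an interpolating Blaschke product $b\in H^\infty$ and a M\"obius automorphism $\phi$ of $\Di$ satisfying $\hat b\circ L=\phi$. This identity emerges directly from Hoffman's canonical parametrization, and since any two analytic parametrizations of the same Gleason part differ by a M\"obius automorphism of $\Di$, it also holds (with a suitably adjusted $\phi$) for the parametrization $L$ given in the statement. From this factorization one reads off $L^{-1}=\phi^{-1}\circ\hat b|_{\pi(x)}$, a composition of continuous maps; hence $L^{-1}$ is continuous and $L$ is a homeomorphism onto $\pi(x)$.

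To spell out the compact version of the argument: for a compact $K\subset\pi(x)$ the image $\hat b(K)$ is compact in $\Co$ as the continuous image of a compact set, and it is contained in $\hat b(\pi(x))=\phi(\Di)=\Di$. A compact subset of $\Co$ contained in $\Di$ is automatically bounded away from $\partial\Di$, so $\hat b(K)$ is compact as a subspace of $\Di$. Since $\phi^{-1}:\Di\to\Di$ is continuous, $L^{-1}(K)=\phi^{-1}(\hat b(K))$ is compact in $\Di$.

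The main obstacle here is the appeal to the Blaschke product-M\"obius factorization $\hat b\circ L=\phi$, which is the deep input from Hoffman's theory. Without it, the fact that $L$ is a continuous bijection from the non-compact space $\Di$ onto $\pi(x)\subset\mathfrak M$ does not by itself guarantee that $L$ is a homeomorphism: the usual compactness-based method of inverting continuous bijections requires a compact domain, and the pseudohyperbolic metric $\rho$ on $\mathfrak M$ is only lower semicontinuous in each variable, so a softer metric argument on $\rho$-balls is not sufficient.
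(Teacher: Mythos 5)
The easy half of your argument (continuity of $L$ plus compactness of $L^{-1}(K)$ gives compactness of $K$) is fine, but the hard direction rests on a claim that Hoffman's theory does not deliver: that there exist an interpolating Blaschke product $b$ and a M\"obius automorphism $\phi$ of $\Di$ with $\hat b\circ L=\phi$. What Hoffman's theorem actually gives (see \cite[Ch.\,X]{Ga}) is that $\hat b\circ L$ equals a unimodular constant times an interpolating Blaschke product whose zero set is $L^{-1}\bigl(Z(\hat b)\cap\pi(x)\bigr)$; this set is typically infinite, so $\hat b\circ L$ is in general not injective on $\Di$ and the formula $L^{-1}=\phi^{-1}\circ\hat b|_{\pi(x)}$ is unavailable. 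One can arrange for $\hat b\circ L$ to be a rotation only for special points $x$ (e.g., points in the closure of a thin interpolating sequence), not for an arbitrary nontrivial part. The version of Hoffman's theorem that the paper does invoke (in Lemma \ref{lem7.1}) only guarantees that $\hat B\circ L_i$ is one-to-one \emph{in a neighbourhood of a prescribed compact subset of $\Di$}, which is useless here because compactness of $L^{-1}(K)$ is precisely what is to be proved. Worse, the intermediate statement you reduce everything to --- that $L$ is a homeomorphism of $\Di$ onto $\pi(x)$ in the relative Gelfand topology --- fails for general nontrivial parts (the parts for which it holds are exactly the so-called homeomorphic parts); this is why the paper proves only the weaker Corollary \ref{cor2.4}, that $L^{-1}$ is continuous on compact subsets.

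The paper's actual proof is quite different: it covers $K$ by finitely many of Hoffman's basic compact neighbourhoods homeomorphic to $\bar\Di\times\beta\N$, uses a cardinality argument ($K\subset\pi(x)$ has cardinality at most $\mathfrak c$, while every infinite compact subset of $\beta\N$ has cardinality $2^{\mathfrak c}$) to split $K$ into finitely many pieces each lying over a single point of $\beta\N$, and then rules out a sequence $z_n\to\mathbb S$ with $L(z_n)\in K$ converging in $\mathfrak M$ by combining the Axler--Gorkin result that sequential convergence to a point of $\mathfrak M_a$ forces $\rho$-convergence with the fact that $L$ is a $\rho$-isometry of $\Di$ onto $\pi(x)$. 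You would need an argument at this level; the M\"obius factorization shortcut does not exist.
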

\begin{proof}
According to Hoffman, see \cite[page\, 104]{H}, the base of the topology of $\mathfrak M_a$ consists of compact subsets homeomorphic to $\bar{\Di}\times \beta\N$ (here $\bar{\Di}\subset\Co$ is the closure of $\Di$ and $\beta\N$ is the Stone-\v{C}ech compactification on $\N$). Let 
$(S_i)_{i=1}^s\subset\mathfrak M_a$ be a finite cover of $K$ by such sets, $\phi_i:S_i\to \bar{\Di}\times\beta\N$, $1\le i\le s$, be the corresponding homeomorphisms, and $p:\bar{\Di}\times\beta\N\to\beta\N$ be the projection onto the second coordinate. 
Consider the compact sets $K_i=K\cap S_i\subset\pi(x)$, $1\le i\le s$.
Then $p\circ\phi_i $ maps $K_i$ onto a compact subset $K_i'\subset\beta\N$.
Since every infinite compact subset of $\beta\N$ contains a  homeomorphic copy of $\beta\N$, its cardinality  is $2^{\mathfrak c}$. On the other hand, the cardinality of $K_i'$ does not exceed $\mathfrak c$, the cardinality of $\pi(x)$. Hence, $K_i'$ is a finite set, say, $K_i'=\{t_{i1},\dots, t_{il_i}\}$.
 In particular, $K_i=\sqcup_{j=1}^{l_i} K_{ij}$, where $K_{ij}
\subset\pi(x)$ is compact  such that  $\phi_i(K_{ij})\subset\bar{\Di}\times\{t_{ij}\}$.
 Further, if for some $i,j$  the closed set $L^{-1}(K_{ij})\subset\Di$ is not compact, there is a sequence $\{z_n\}\subset L^{-1}(K_{ij})$ tending to $\mathbb S$ such that the sequence $\{(\phi_i\circ L)(z_n)\}\subset \bar{\Di}\times \{t_{ij}\}$ converges; hence,
 the sequence $\{L(z_n)\}\subset K_{ij}$ converges to some $x\in K_{ij}$. Due to \cite[Cor.\,13]{AG} this implies that $\lim_{n\to\infty}\rho(L(z_n),x)=0$ (see \eqref{e1.2} for the definition of $\rho$). On the other hand,  $L: (\Di,\rho)\rightarrow (\pi(x),\rho)$ is an isometric isomorphism (see \cite{H}, page 105, equation (6.12)). Hence, $\lim_{n\to\infty} z_n =L^{-1}(x)\in\Di$, a contradiction which proves the result.
\end{proof}
As a corollary of the proposition we obtain:
\begin{C}\label{cor2.4}
The mapping $L^{-1}:\pi(x)\rightarrow\Di$ is continuous on compact subsets.
\end{C}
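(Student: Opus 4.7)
The plan is to deduce the corollary directly from Proposition \ref{prop2.3} together with the standard fact that a continuous bijection from a compact space to a Hausdorff space is a homeomorphism.

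Let $K\subset \pi(x)$ be a subset that is compact in the topology of $\mathfrak M$. By Proposition \ref{prop2.3}, the preimage $L^{-1}(K)$ is a compact subset of $\Di$. Since $L:\Di\to\pi(x)$ is a continuous bijection by the definition of an analytic parametrization, its restriction
\[
L|_{L^{-1}(K)}:L^{-1}(K)\longrightarrow K
\]
is a continuous bijection from a compact space onto $K$. Because $\mathfrak M$ is (compact) Hausdorff, the subspace $K\subset\mathfrak M$ is Hausdorff. The classical result that a continuous bijection from a compact space to a Hausdorff space is automatically a homeomorphism then yields that $L|_{L^{-1}(K)}$ is a homeomorphism; in particular, its inverse $L^{-1}|_K:K\to L^{-1}(K)\subset\Di$ is continuous. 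Since $K$ was an arbitrary compact subset of $\pi(x)$, the mapping $L^{-1}$ is continuous on compact subsets of $\pi(x)$, as required.

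There is no serious obstacle here: the entire content of Proposition \ref{prop2.3} (whose proof is the substantive part) has already reduced matters to a compact-to-Hausdorff bijection argument. The only point worth noting is that continuity of $L^{-1}|_K$ into the subspace $L^{-1}(K)\subset\Di$ is equivalent to continuity into $\Di$ itself, so no additional topological argument is needed.
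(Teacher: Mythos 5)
Your proof is correct and follows essentially the same route as the paper: apply Proposition \ref{prop2.3} to get compactness of $L^{-1}(K)$, then invoke the standard fact that a continuous bijection from a compact space to a Hausdorff space is a homeomorphism. No issues.
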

\begin{proof}
Let $K\subset\pi(x)$ be a compact subset. Then due to Proposition \ref{prop2.3}, $L^{-1}(K)=:K'$ is a compact subset of $\Di$. Since $L|_{K'}:K'\rightarrow K$ is a continuous bijection of compact spaces, it is a homeomorphism, i.e., $L^{-1}|_{K}:K\rightarrow K'$ is continuous.
\end{proof}
\begin{proof}[Proof of Theorem \ref{te1.6}]
Let $F: (Z,z)\rightarrow (\mathfrak M, x)$ be a pointed mapping from a compactly generated space $Z$ of class $\mathscr C$ to $\mathfrak M$. Due to Theorem \ref{te1.5}, $F(Z)\subset\pi(x)$. If $\pi(x)$ is an analytic disk, we denote by $L_x:\Di\rightarrow\pi(x)$  an analytic parametrization  sending $0$ to $x$. 
\begin{Lm}\label{lem2.5}
There is a continuous mapping $\widetilde F: (Z,z)\rightarrow (\Di,0)$ such that $F=L_x\circ\widetilde F$.
\end{Lm}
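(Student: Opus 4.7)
The plan is to take the obvious set-theoretic inverse candidate $\widetilde F := L_x^{-1}\circ F$ and show it is continuous. Since $L_x:\Di\to\pi(x)$ is a continuous bijection, composing with $L_x$ immediately gives $L_x\circ\widetilde F=F$, and the basepoint condition $\widetilde F(z)=0$ follows from $F(z)=x=L_x(0)$. Also, by Theorem \ref{te1.5}, $F(Z)\subset\pi(x)$, so $\widetilde F$ is well-defined as a map from $Z$ into $\Di$.

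The only real content is continuity. The key remark is that $Z$ is compactly generated, so $\widetilde F$ is continuous on all of $Z$ as soon as its restriction to every compact subset $K\subset Z$ is continuous. (Indeed, for a closed set $C\subset\Di$, the intersection $\widetilde F^{-1}(C)\cap K=(\widetilde F|_K)^{-1}(C)$ is closed in $K$, hence $\widetilde F^{-1}(C)$ is closed in $Z$.)

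Fix a compact subset $K\subset Z$. Then $F(K)\subset\pi(x)$ is compact in the Gelfand topology of $\mathfrak M$, because $F$ is continuous. By Corollary \ref{cor2.4}, the restriction $L_x^{-1}|_{F(K)}:F(K)\to\Di$ is continuous. Therefore
\[
\widetilde F|_K \;=\; L_x^{-1}\big|_{F(K)}\circ F|_K
\]
is a composition of continuous maps, hence continuous. This establishes continuity of $\widetilde F$ on every compact subset of $Z$ and, by the previous paragraph, continuity on $Z$.

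I expect the main (and only real) obstacle to be the continuity step. A naive attempt would just invert $L_x$ pointwise and hope continuity is automatic, but $L_x^{-1}:\pi(x)\to\Di$ need not be continuous on all of $\pi(x)$ with respect to the Gelfand topology; its continuity only holds on compact pieces (Corollary \ref{cor2.4}). This is precisely why the compactly generated hypothesis on $Z$ is essential — it is exactly the device that upgrades the compact-wise continuity of $\widetilde F$ to global continuity.
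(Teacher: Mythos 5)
Your proof is correct and is essentially identical to the paper's: both define $\widetilde F=L_x^{-1}\circ F$, reduce continuity to compact subsets via the compactly generated hypothesis, and then invoke Corollary \ref{cor2.4} on the compact image $F(K)\subset\pi(x)$. Your closing remark correctly identifies why the compactly generated assumption is the essential ingredient.
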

\begin{proof}
We set
\begin{equation}\label{eq6.1}
\widetilde F=L_x^{-1}\circ F.
\end{equation}
Since $Z$ is compactly generated, $\widetilde F$ is continuous if and only if $\widetilde F|_K$ is continuous for each compact subset $K \subseteq Z$, see, e.g., \cite{St}.
For such $K$ continuity of $F$ implies that $F(K)\subset\pi(x)$ is compact.
Then due to  Corollary \ref{cor2.4}, $L_x^{-1}$ is continuous on $F(K)$. Thus, $\widetilde F|_K=L_x^{-1}|_{F(K)}\circ F|_K $ is continuous, as required. 
\end{proof}
Now, we are ready to prove the theorem.

(i) If $\pi(x)=\{x\}$,  by the corona theorem $F$ is a pointwise limit of a net of constant mappings with values in $\Di$. For otherwise, let $(L_\alpha)_{\alpha\in A}$ be a net of M\"{o}bius transformations of $\Di$ converging pointwise to $L_x$ (see, e.g., \cite[Ch.\,X]{Ga} for its existence). Then due to Lemma \ref{lem2.5} the net $(L_\alpha\circ\widetilde F)_{\alpha\in A}$ of continuous mappings from $Z$ to $\Di$ converges pointwise to $F$.

 (ii) If $\pi(x)=\{x\}$,  the mapping $F$ is constant and so is trivially pointed null  homotopic. In turn, if $\pi(x)$ is an analytic disk, then since the continuous mapping  $\widetilde F: (Z,z)\rightarrow (\Di,0)$ is pointed null homotopic, $F: (Z,z)\rightarrow (\mathfrak M, x)$ is pointed null homotopic as well.
 
 The proof of the theorem is complete.
\end{proof}

\sect{Proof of Theorem \ref{te1.2}}
Each $\Phi\in {\rm Hom}(\mathscr D,A)$, $\mathscr D=[H^\infty,\mathscr U_\mathscr D]$, is a bounded linear operator, see, e.g., \cite[\S24B, Theorem]{L}. Hence, the transpose $\Phi^*: A^*\to \mathscr D^*$ is well defined and maps the maximal ideal space $\mathfrak M(A)$ of $A$ continuously into $\mathfrak M(\mathscr D)$.
Further, every evaluation homomorphism $\delta_z$ at a point $z\in Z$ is an element of $\mathfrak M(A)$. Thus, since $A\subset C(Z)$, the mapping $\Delta: Z\to\mathfrak M(A)$, $z\mapsto\delta_z$,  is continuous. The composition $\Phi^*\circ\Delta$ is a continuous mapping from $Z$ to $\mathfrak M(\mathscr D)\subset\mathfrak M$. 
\begin{Lm}\label{lem3.1}
The compact set $(\Phi^*\circ\Delta)(Z)$ meets finitely many   Gleason parts of $\mathfrak M(\mathscr D)$.
\end{Lm}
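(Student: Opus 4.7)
The plan is a proof by contradiction, using Theorem~\ref{te1.5}, property~(b) of Section~2, and the universal property of the Stone--\v{C}ech compactification.

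By Theorem~\ref{te1.5} applied to each connected component $Z_\alpha$ of $Z$ (which, by hypothesis, is of class $\mathscr C$), the continuous map $F := \Phi^*\circ\Delta$ sends $Z_\alpha$ into a single Gleason part $\pi_\alpha\subset\mathfrak M(\mathscr D)$. In particular, the set of Gleason parts that $F(Z)$ meets is indexed by a subset of the set $\pi_0(Z)$ of connected components of $Z$, which has cardinality $<2^\mathfrak{c}$.

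Suppose for contradiction this set is infinite. I would choose pairwise distinct components $Z_{\alpha_n}$, $n\in\N$, whose images lie in pairwise distinct Gleason parts $\pi_n$, together with representatives $z_n\in Z_{\alpha_n}$ and $x_n:=F(z_n)\in\pi_n$. If some subsequence $x_{n_k}$ converged in $\mathfrak M$ to a point $x^*\in\pi^*$, property~(b) of Section~2 (i.e., \cite[Prop.\,1]{D}) would force $x_{n_k}\in\pi^*$ eventually, contradicting the distinctness of the $\pi_n$. Hence $\{x_n\}$ has no convergent subsequence in $\mathfrak M$, and by continuity of $F$ neither does $\{z_n\}$ in $Z$.

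Next, since $Z$ is compact Hausdorff, the assignment $n\mapsto z_n$ extends by the universal property of $\beta\N$ to a continuous $\psi:\beta\N\to Z$. Composing with the natural quotient $q:Z\to\pi_0(Z)$ gives a continuous $\bar\psi:=q\circ\psi:\beta\N\to\pi_0(Z)$. The goal is to show that $\bar\psi$ is injective, for then $|\pi_0(Z)|\ge|\beta\N|=2^\mathfrak{c}$, contradicting the hypothesis. By Theorem~\ref{te1.5}, establishing $\bar\psi(\xi)\ne\bar\psi(\eta)$ for distinct $\xi,\eta\in\beta\N$ reduces to verifying that $(F\circ\psi)(\xi)$ and $(F\circ\psi)(\eta)$ lie in different Gleason parts of $\mathfrak M$, equivalently that their pseudohyperbolic distance equals~$1$.

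The main obstacle is this final separation step. One would construct, for any partition $\N=E_1\sqcup E_2$ witnessing $\xi\ne\eta$, a function $f\in H^\infty$ with $|\hat f(x_n)|$ uniformly bounded away from $0$ on $\{x_n:n\in E_1\}$ and uniformly small on $\{x_n:n\in E_2\}$, exploiting the fact that $\rho(x_n,x_m)=1$ for $n\ne m$ together with Hoffman's realization of non-trivial parts via interpolating sequences and the Carleson corona theorem to supply enough $H^\infty$ functions for interpolation on $\{x_n\}$. Ultralimits of $\hat f$ along $\xi$ and $\eta$ then differ, forcing $(F\circ\psi)(\xi)$ and $(F\circ\psi)(\eta)$ into different Gleason parts; the cardinality hypothesis $|\pi_0(Z)|<2^\mathfrak{c}$ then yields the desired contradiction and completes the proof.
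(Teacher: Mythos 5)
Your first paragraph is exactly the paper's first step: Theorem~\ref{te1.5} applied componentwise shows the Gleason parts met by $(\Phi^*\circ\Delta)(Z)$ are indexed by a subset of $\pi_0(Z)$, hence number fewer than $2^{\mathfrak c}$. But from that point on you are attempting to prove from scratch the one nontrivial ingredient that the paper simply cites: Dufresnoy's Proposition~2 in \cite{D}, which says that a compact subset of $\mathfrak M$ meeting infinitely many Gleason parts has cardinality at least $2^{\mathfrak c}$ (the paper gets its contradiction immediately from this together with the $<2^{\mathfrak c}$ bound). Your $\beta\N$ skeleton is the right shape for such a proof, but the step you yourself label ``the main obstacle'' is precisely the hard content, and it is not carried out. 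Two concrete problems there. First, the interpolation you need is not supplied by the facts you invoke: pairwise pseudohyperbolic distance $1$ between the $x_n$ does not by itself give functions $f\in H^\infty$ with prescribed behaviour on an arbitrary partition $E_1\sqcup E_2$ (separation is not sufficient for interpolation even for sequences in $\Di$), and Hoffman's description via interpolating sequences concerns only points of $\mathfrak M_a$, whereas your $x_n$ may all lie in trivial one-point parts, which by Hoffman's theorem are exactly the points \emph{not} in closures of interpolating sequences. Second, even granting such an $f$, ``the ultralimits of $\hat f$ along $\xi$ and $\eta$ differ'' shows only that $(F\circ\psi)(\xi)\ne(F\circ\psi)(\eta)$; to place them in \emph{different Gleason parts} you need $\rho=1$, i.e., by \eqref{e1.2} a normalized interpolation with $\|f\|_{H^\infty}\le1$, $\hat f$ vanishing along $\eta$ and of modulus arbitrarily close to $1$ along $\xi$ --- a strictly stronger requirement that your sketch conflates with mere distinctness of values.

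A minor further point: the universal property of $\beta\N$ requires a compact \emph{Hausdorff} target, and $Z$ is only assumed compact; you should instead extend $n\mapsto x_n$ to a map $\beta\N\to(\Phi^*\circ\Delta)(Z)\subset\mathfrak M(\mathscr D)$, which is compact Hausdorff (this also makes the detour through $\pi_0(Z)$ unnecessary, since distinct Gleason parts already force distinct components by Theorem~\ref{te1.5}). The efficient repair of your argument is simply to replace the entire last two paragraphs by the citation of \cite[Prop.~2]{D} (or the closely related results of \cite{AG}), which is what the paper does.
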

\begin{proof}
By the hypotheses,  each connected component of $Z$ is of class $\mathscr C$. Hence, by Theorem \ref{te1.5},  its image under $\Phi^*\circ\Delta$ lies in a Gleason part of $\mathfrak M(\mathscr D)$. Since by our assumption the cardinality of the set of connected components of $Z$ is less than $2^{\mathfrak c}$, the set of  Gleason parts which meet $(\Phi^*\circ\Delta)(Z)$ is of cardinality $<2^{\mathfrak c}$ as well. 
Assume, on the contrary, that $(\Phi^*\circ\Delta)(Z)$ meets infinitely many  Gleason parts. Then due
to \cite[Prop.\,2]{D} it has cardinality $\ge 2^{\mathfrak c}$, a contradiction which proves the lemma. 
\end{proof}
The lemma implies that there exist distinct Gleason parts $\pi_i$, $1\le i\le k$, such that $\Phi^*\circ\Delta$ maps each connected component of $Z$ to one of $\pi_i$. We set 
\[
K_i:=(\Phi^*\circ\Delta)(Z)\cap\pi_i,\quad 1\le i\le k.
\]
\begin{Lm}\label{lem3.2}
Each $K_i\subset\pi_i$ is compact.
\end{Lm}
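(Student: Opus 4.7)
The plan is to prove that each $K_i$ is closed in the compact set $F(Z):=(\Phi^*\circ\Delta)(Z)$, whence it is compact.  The argument splits according to whether the Gleason part $\pi_i$ of $\mathfrak M(\mathscr D)$ is a singleton or a non-trivial analytic disk.

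If $\pi_i=\{x_i\}\subset\mathfrak M(\mathscr D)\cap\mathfrak M_s$ is a trivial Gleason part then $K_i$ is the single point $\{x_i\}$, trivially compact.

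If $\pi_i\subset\mathfrak M(\mathscr D)\cap\mathfrak M_a$ is an analytic disk, I would mirror the proof of Proposition~\ref{prop2.3}.  For each $y\in F(Z)\cap\mathfrak M_a$ I would take a Hoffman basic compact neighborhood $S_y\cong\bar\Di\times\beta\N$, and for each $y$ in the finite set $F(Z)\cap\mathfrak M_s$ a small open neighborhood $V_y$ (whose choice is the heart of the argument, see below).  Compactness of $F(Z)$ extracts a finite subcover.  Inside each $S_y$, the compact set $F(Z)\cap S_y$ projects onto a compact subset of $\beta\N$ of cardinality at most $\mathfrak c$; since every infinite compact subset of $\beta\N$ has cardinality $2^{\mathfrak c}$, this projection must be finite, and $F(Z)\cap S_y$ accordingly decomposes into finitely many closed pieces, each lying in a single slice $\bar\Di\times\{t\}$ and hence in a single Gleason part.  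The piece that lies in $\pi_i$ is a closed subset of $S_y$, hence compact, and summing the finitely many such pieces over the finite subcover expresses $K_i$ as a finite union of compact sets.

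The main obstacle is the choice of $V_y$ at each point $x_l\in F(Z)\cap\mathfrak M_s$: one must show that $K_i$ does not accumulate at $x_l$, so that $V_y$ can be taken disjoint from $K_i$.  Assuming otherwise yields a net $y_\gamma\in K_i$ with $y_\gamma\to x_l$.  Using the lower semi-continuity of $\rho$ on $\mathfrak M\times\mathfrak M$ (a supremum of continuous functions, cf.\ \eqref{e1.2}) together with $\rho(x_l,z)=1$ for every $z\in\pi_i$, the lifts $\widetilde z_\gamma:=L_i^{-1}(y_\gamma)\in\Di$ are forced to satisfy $|\widetilde z_\gamma|\to 1$.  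From here, passing to a subnet that converges in the compact metric space $\bar\Di$ to some $z^*\in\mathbb S$ and exploiting first-countability of $\bar\Di$ to extract a cofinal countable sequence $\widetilde z_{\gamma_n}\to z^*$, one would produce a sequence $L_i(\widetilde z_{\gamma_n})\in\pi_i$ converging in $\mathfrak M$ to $x_l\notin\pi_i$, contradicting property (b) of \cite{D} invoked in the proof of Proposition~\ref{prop2.1}.  The delicate step---the heart of the obstacle---is ensuring that this sequential extraction preserves convergence to $x_l$ despite $\mathfrak M$ not being first-countable, which would have to be verified using the cofinal structure of the net and the analytic control on $\hat f\circ L_i\in H^\infty$ for $f\in\mathscr D$.
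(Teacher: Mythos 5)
You have correctly isolated the crux, but the step you flag as ``delicate'' is in fact a genuine gap, and the route you propose for closing it does not work. Having produced a net $y_\gamma\in K_i$ with $y_\gamma\to x_l\in\mathfrak M_s$ and lifts $\widetilde z_\gamma=L_i^{-1}(y_\gamma)$ with $|\widetilde z_\gamma|\to 1$ (this much is fine: lower semicontinuity of $\rho(L_i(0),\cdot)$ gives $\liminf_\gamma|\widetilde z_\gamma|\ge\rho(L_i(0),x_l)=1$), you cannot pass to a countable cofinal subfamily $\{\gamma_n\}$ with $\widetilde z_{\gamma_n}\to z^*$ in $\bar\Di$ and expect $L_i(\widetilde z_{\gamma_n})\to x_l$ in $\mathfrak M$. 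Convergence of the lifts in $\bar\Di$ gives no control on convergence of $L_i(\widetilde z_{\gamma_n})$ in $\mathfrak M$: the fiber over a boundary point of $\Di$ in $\mathfrak M$ is enormous, $\mathfrak M$ is not first-countable at $x_l$, and a countable subfamily of a convergent net need not converge to the net's limit. Indeed, by the very results quoted in the paper a sequence $L_i(z_n)$ with $z_n\to\mathbb S$ typically has a closure of cardinality $2^{\mathfrak c}$, so it is as far from being convergent as possible; Dufresnoy's property (b) therefore never gets a convergent sequence to act on, and your intended contradiction evaporates.

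The fix --- and the paper's actual argument --- is to abandon convergence altogether and argue by cardinality. If $K_i$ is not compact, then (using Proposition \ref{prop2.3} and the fact that $K_i$ is the trace of the closed set $(\Phi^*\circ\Delta)(Z)$ on $\pi_i$) one extracts a genuine \emph{sequence} $\{z_n\}\subset\Di$ tending to $\mathbb S$ with $L_i(z_n)\in K_i$. Its cluster points in $\mathfrak M$ exist by compactness; one does not need them to be limits. By \cite[Thm.\,3]{AG}, if some cluster point lies outside $\pi_i$, the closure of $\{L_i(z_n)\}$ has cardinality $2^{\mathfrak c}$; but that closure sits inside $(\Phi^*\circ\Delta)(Z)$, which by Lemma \ref{lem3.1} is contained in finitely many Gleason parts and hence has cardinality at most $\mathfrak c$. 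So all cluster points lie in $\pi_i$, the closure of $\{L_i(z_n)\}$ is a compact subset of $\pi_i$, and Proposition \ref{prop2.3} forces $\{z_n\}$ to be relatively compact in $\Di$ --- a contradiction. Note that this also makes your entire covering apparatus (Hoffman neighbourhoods $S_y$, slices over $\beta\N$, the sets $V_y$) unnecessary: the cardinality argument handles accumulation at \emph{all} points outside $\pi_i$ at once, trivial parts included.
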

 \begin{proof}
 Clearly, it suffices to prove the result for  $\pi_i$ an analytic disk. Let $L_i:\Di\to\pi_i$ be an analytic parametrization. Suppose, on the contrary, that $K_i$ is not compact. Then there is a sequence $\{z_n\}_{n\in\N}\subset\Di$ tending to $\mathbb S$ such that
all $L_i(z_n)\in K_i$. Let $x\in K$ be a limit point of $\{L_i(z_n)\}_{n\in\N}$. Due to \cite[Thm.\,3]{AG}, if $x\not\in\pi$, then the cardinality of the closure of $\{L_i(z_n)\}_{n\in\N}$ is $2^{\mathfrak c}$. However, $K$ belongs to finitely many Gleason parts and so has cardinality at most $\mathfrak c$. This shows that all limit points of $\{L_i(z_n)\}_{n\in\N}$ belong to $\pi_i$, i.e., the closure of the set is a compact subset of $\pi_i$. Then due to Proposition \ref{prop2.3}, $\{z_n\}_{n\in\N}$ is a relatively compact subset of $\Di$, a contradiction which proves the lemma.
\end{proof}
As a direct corollary of the lemma we obtain that all
$Y_i:=(\Phi^*)^{-1}(K_i)$ are clopen subsets of $\mathfrak M(A)$ and all $Z_i:=(\Phi^*\circ\Delta)^{-1}(K_i)$ are clopen subsets of $Z$ formed by disjoint unions of some connected components. Hence, by the Shilov idempotent theorem, see, e.g., \cite[Ch.\,III, Cor.\,6.5]{G}, there exist idempotents $p_i\in A$, $1\le i\le k$, such that ${\rm supp}\, p_i=Z_i$ and 
$\Phi=\sum_{i=1}^k\, p_i\cdot\Phi$. Here each 
$p_i\cdot\Phi\in {\rm Hom}(\mathscr D,A_{p_i})$ and the maximal ideal space of $A_{p_i}$ is  $Y_i$.
The transpose of $p_i\cdot\Phi$ restricted to $Z_i$ coincides with the mapping $(\Phi^*\circ\Delta)|_{Z_i}$ and,  in particular, its image is $K_i\subset\pi_i$.
To complete the proof of the theorem we must show that each $p_i\cdot\Phi$ is given by \eqref{eq1.5} or \eqref{eq1.6} and that $p_i\cdot\Phi$ is a compact operator.

To this end, we consider two cases:\smallskip

1. $\pi_i:=\{x_i\}$ is a trivial Gleason part.

Then, for every $f\in \mathscr D$ and all $z\in Z_i$,
\begin{equation}\label{eq3.1}
((p_i\cdot\Phi)(f))(z)=\delta_z(\Phi(f))=\hat f((\Phi^*\circ\Delta)(z))=\hat f(x_i).
\end{equation}
 Thus, $(p_i\cdot \Phi)(f)=\hat f(x_i)\cdot p_i$ for all $f\in \mathscr D$ as in \eqref{eq1.5}.\smallskip
 
 2. $\pi_i$ is an analytic disk.  
 
 Let $L_i:\Di\to\pi_i$ be an analytic parametrization. Since the image of $(\Phi^*\circ\Delta)|_{Z_i}$ is a compact subset of $\pi_i$, there is a function $g_i\in C(Z_i)$ with image in $\Di$ such that $L_i\circ g_i= (\Phi^*\circ\Delta)|_{Z_i}$ (cf. Lemma \ref{lem2.5}). Thus,
 \begin{equation}\label{eq3.2}
 ((p_i\cdot\Phi)(f))(z)=\delta_z(\Phi(f))=\hat f((\Phi^*\circ\Delta)(z))=(\hat f\circ L_i\circ g_i)(z),\quad f\in \mathscr D,\ z\in Z_i.
 \end{equation}
 To show that $p_i\cdot\Phi$ is as in \eqref{eq1.6}, we must prove the following
 \begin{Lm} \label{lem7.1}
Function $g_i$ belongs to $A|_{Z_i}$.
 \end{Lm}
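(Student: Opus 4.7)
The plan is to exhibit a single element $\tilde b \in H^\infty\subset\mathscr D$ such that $\Phi(\tilde b)$ agrees with $g_i$ on $Z_i$; this will immediately place $g_i$ in $A|_{Z_i}$ and prove the lemma.

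First I would invoke Hoffman's theory of non-trivial Gleason parts (see \cite{H} or, e.g., \cite[Ch.\,X]{Ga}): since $\pi_i \subset \mathfrak M_a$, there exists an interpolating Blaschke product $b \in H^\infty$ whose Gelfand transform $\widehat b|_{\pi_i} : \pi_i \to \Di$ is a bijection onto $\Di$. Composed with the analytic parametrization $L_i$, the map $\widehat b \circ L_i : \Di \to \Di$ is a bounded holomorphic bijection and therefore a Möbius automorphism $T \in \mathrm{Aut}(\Di)$. Because $T^{-1}$ is holomorphic on a neighbourhood of $\overline{\Di}$, the composition $\tilde b := T^{-1} \circ b$ lies in $H^\infty \subset \mathscr D$, and by continuity of $T^{-1}$ on $\overline{\Di}$ its Gelfand transform satisfies $\widehat{\tilde b} = T^{-1} \circ \widehat b$ on all of $\mathfrak M$ (the two continuous functions agree on the dense subset $\Di$). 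In particular, $\widehat{\tilde b} \circ L_i = \mathrm{id}_\Di$.

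Next I would apply formula \eqref{eq3.2} with $f = \tilde b$: for every $z \in Z_i$,
$$
\Phi(\tilde b)(z) = \bigl((p_i \cdot \Phi)(\tilde b)\bigr)(z) = \widehat{\tilde b}\bigl(L_i(g_i(z))\bigr) = g_i(z),
$$
where the first equality uses that $p_i(z) = 1$ and $p_j(z) = 0$ for $j \ne i$ on $Z_i$, combined with the decomposition $\Phi = \sum_j p_j \cdot \Phi$ established earlier. Thus $\Phi(\tilde b)|_{Z_i} = g_i$, yielding $g_i \in A|_{Z_i}$ as required.

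The principal obstacle in this plan is the classical input invoked at the outset, namely the extraction of an interpolating Blaschke product $b$ whose Gelfand transform maps $\pi_i$ bijectively onto $\Di$. This is not a formal consequence of the machinery developed in the paper but a substantial structural result, resting on Hoffman's realization of any non-trivial Gleason part as an ultrafilter limit of Möbius pseudohyperbolic disks centred at the zeros of such a $b$. Once this ingredient is in hand, the rest of the argument is a one-line application of \eqref{eq3.2}.
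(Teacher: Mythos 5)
Your argument from the second paragraph onward is fine: \emph{if} there were an interpolating Blaschke product $b$ with $\hat b\circ L_i\in{\rm Aut}(\Di)$, then $\tilde b:=T^{-1}\circ b$ would satisfy $\widehat{\tilde b\,}\circ L_i={\rm id}_\Di$, and \eqref{eq3.2} would give $\Phi(\tilde b)|_{Z_i}=g_i\in A|_{Z_i}$ in one line. The gap is precisely the input you flag at the outset, and it is a real one: Hoffman's theory does \emph{not} supply, for an arbitrary non-trivial Gleason part $\pi_i$, an interpolating Blaschke product whose Gelfand transform maps $\pi_i$ bijectively onto $\Di$. What Hoffman (and \cite[Ch.\,X,\,Thm.\,1.7]{Ga}) gives is that for an interpolating Blaschke product $b$ vanishing on $\pi(x)$, the composition $\hat b\circ L_x$ is again a Blaschke product whose zero set is the $L_x$-preimage of $Z(\hat b)\cap\pi(x)$ --- in general an infinite interpolating sequence, so $\hat b\circ L_x$ is far from injective on all of $\Di$. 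It is a M\"obius transformation exactly when that intersection is a single simple zero, which is the ``thin part'' situation and does not hold for a general point of $\mathfrak M_a$; one cannot simply quote it. What \emph{can} always be arranged (by refining the interpolating sequence so that its separation constant is close to $1$) is that $\hat b\circ L_x$ is one-to-one on a prescribed compact subset of $\Di$, and that is all the paper uses.

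This is also where your route and the paper's diverge. The paper takes $B$ with $\hat B\circ L_i$ injective only on a neighbourhood of the compact set $g_i(Z_i)$, observes via \eqref{eq3.2} that $\hat B\circ L_i\circ g_i=\Phi(B)|_{Z_i}$ is an element of $A|_{Z_i}$ with spectrum $S_i=(\hat B\circ L_i)(g_i(Z_i))$, and then recovers $g_i=h_i\circ(\hat B\circ L_i\circ g_i)$ by applying the holomorphic functional calculus to a local inverse $h_i$ defined near $S_i$. To repair your proof you would need either to justify the global bijectivity claim for the specific part at hand (not possible in general) or to replace the global inverse by a local one, at which point you are forced into the functional-calculus argument of the paper.
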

 \begin{proof}
Below, we naturally identify $A_{p_i}$ with $A|_{Z_i}$.
 
Since $g(Z_i)\subset\Di$ is compact,  by Hoffman's theorem  \cite[Ch.\,X,\,Thm.\,1.7]{Ga}  there is
 an interpolating Blaschke product $B$ such that the function $\hat B\circ L_i\in H^\infty\subset \mathscr D$ is one-to-one in a neighbourhood of $g(Z_i)$. 
 Then there is a function $h_i$, holomorphic in a neighbourhood of the compact set $S_i:=(\hat B \circ L_i)(g(Z_i))$ such that $h_i|_{S_i}$ is inverse of $(\hat B \circ L_i)|_{g(Z_i)}$.  Due to \eqref{eq3.2},  $\hat B \circ L_i\circ g_i\in A|_{Z_i}$, and its spectrum is $S_i$.
 Since  $h_i$ is holomorphic in a neighbourhood of $S_i$, the functional calculus for $\hat B \circ L_i\circ g_i$, see, e.g., \cite[Ch.\,10.26]{R}, implies that
 $g_i=h_i\circ \hat B \circ L_i\circ g_i$ lies in $A|_{Z_i}$ as well. 
 \end{proof}
 From the lemma we obtain that $p_i\cdot\Phi$ given by \eqref{eq3.2} is as in \eqref{eq1.6}.\smallskip
 
Finally, let us show that
each $p_i\cdot\Phi\in {\rm Hom}(\mathscr D,A_{p_i})$ is a compact operator.

It is clear for $p_i\cdot\Phi$ satisfying \eqref{eq3.1}. In turn, if $p_i\cdot\Phi$ satisfies \eqref{eq3.2}, then the spectrum  of $g_i\in A|_{Z_i}$ is $g(Z_i)\subset \Di_r:=\{z\in\Co\, :\, |z|<r\}$ for some $r\in (0,1)$.
Also, recall that $L_i^*(\mathscr D)\subset H^\infty$, see Remark \ref{rem1.3}. 
Thus, $p_i\circ\Phi$ is the composition of the homomorphism $L_i^*:\mathscr D\to H^\infty$, $f\mapsto \hat f\circ L_i$, the restriction homomorphism $R: H^\infty\to H^\infty(\Di_r)$, $f\mapsto f|_{\Di_r}$, and the homomorphism $H^\infty(\Di_r)\to A_{p_i}$ determined by the functional calculus of $g_i\in A|_{Z_i}$, see, e.g., \cite[Thm.\,10.27]{R}. 
Since the operator $R$ is compact, $p_i\cdot\Phi$ is a compact operator as well.

The proof of the theorem is complete.

\end{document}